\documentclass[reqno, oneside]{amsart}

\usepackage{mymacro}
\usepackage{bookmark}
\usepackage{fancyhdr}

\usepackage{hyperref}
\usepackage{url}
\hypersetup{breaklinks=true}

\pagestyle{plain}

\usepackage{comment}
\setcounter{tocdepth}{1}

\makeatletter
\@namedef{subjclassname@2020}{%
  \textup{2020} Mathematics Subject Classification}
\makeatother

\author[Gilabert]{Mart\'in Gilabert Vio}
\address{\hskip-\parindent{}
	Institut Camille Jordan, Université Claude Bernard Lyon 1, 43 boulevard du 11 novembre
1918, 69622 Villeurbanne, France.}
\email{gilabert@math.univ-lyon1.fr // martingilabertvio@gmail.com}
\date{\today}

\subjclass[2020]{Primary 20F99; Secondary 20F65}
\keywords{Neretin group, Tits alternative, Thompson groups}

\title{Dynamical Tits alternative for groups of almost automorphisms of trees}

\begin{document}
	
\begin{abstract}
We prove a dynamical variant of the Tits alternative for the group of almost automorphisms of a locally finite tree $\cT$: a group of almost automorphisms of $\cT$ either contains a nonabelian free group playing ping-pong on the boundary $\partial \cT$, or the action of the group on $\partial \cT$ preserves a probability measure. This generalises to all groups of tree almost automorphisms a result of S. Hurtado and E. Militon for Thompson's group $V$, with a hopefully simpler proof.
\end{abstract}
	
\maketitle
\section{Context and contributions}
The Tits alternative is a celebrated theorem by J. Tits \cite{titsFree} that shows a sharp dichotomy for linear groups over a field of characteristic zero: either they are virtually solvable or they contain a nonabelian free group. A group $G$ is said to satisfy the Tits alternative if for every subgroup $H$ of $G$, $H$ is virtually solvable or contains a nonabelian free group. This group property has been established for a great deal of countable groups (see the references in \cite[Section II B, Complement 42]{deLaHarpeTopics}), usually by applying the Klein ping-pong lemma to exhibit free subgroups.

There are also many countable groups known to fail this alternative, as do many groups of homeomorphisms of compact spaces. For instance, the group $\mathrm{Homeo}(S^1)$ of homeomorphisms of the circle and the group of automorphisms $\mathrm{Aut}(\cT)$ of a regular tree $\cT$ of degree $\geq 3$: the former contains Thompson's group $F$ of piecewise affine dyadic homeomorphisms of $[0,1]$, the latter contains the first Grigorchuk group, and these subgroups are not virtually solvable and do not contain free groups (see \cite{CFP} and \cite{grigorchuk}, respectively). Nevertheless, these two examples satisfy a dynamical variant of this condition which we formulate as follows.

\begin{defn} \label{defn:dynamicalTits}
Let $X$ be a compact topological space and $G$ a group of homeomorphisms of $X$. We say that the action of $G$ on $X$ \emph{satisfies the dynamical Tits alternative} if for every subgroup $H$ of $G$ one of the following holds.
\begin{itemize}
	\item The action of $H$ preserves a regular probability measure on $X$.
	\item There exists a \emph{ping-pong pair} for the action of $H$, that is, there exist $g,h \in H$ and $U_1, U_2, V_1, V_2 \subset X$ disjoint open sets such that
	\begin{equation} \label{eq:pingpong}
		g(X\setminus U_1) \subseteq V_1\quad  \text{ and } \quad h(X \setminus U_2) \subseteq V_2.
	\end{equation}
\end{itemize}
\end{defn}

This dynamical alternative is a property of a group action, not merely of a group. Nonetheless, if $g,h \in H$ belong to a ping-pong pair, the ping-pong lemma shows that $g,h$ generate a nonabelian free group. Moreover, the conditions in Definition \ref{defn:dynamicalTits} exclude each other 
and it suffices to verify them on finitely generated $H \leq G$, see the beginning of the proof of Theorem \ref{teo:neretin}.

\begin{rmk}
Previous work \cite{malicetMiliton, hurtadoMiliton} involving this notion define the dynamical Tits alternative as a weaker condition, where every subgroup $H$ is required to preserve a probability measure or to contain a nonabelian free group. We prefer our definition since this weaker notion is not a dichotomy, and moreover all known proofs of the alternative yield the stronger condition. For instance, whenever $G \curvearrowright X$ satisfies Definition \ref{defn:dynamicalTits}, a subgroup $H \leq G$ preserves a probability measure on $X$ if and only if every pair of elements of $H$ preserve a common probability measure on $X$.
\end{rmk}

\begin{ejms}
A first family of examples comes from one-dimensional dynamics: the action on $S^1$ of the group of homeomorphisms of $S^1$ satisfies the dynamical Tits alternative by a theorem of G. Margulis \cite{margulisCircle}. A related example is the Higman-Thompson group $V$ acting on the triadic Cantor set, which also satisfies the alternative by work of S. Hurtado and E. Militon \cite[Theorem 1.3]{hurtadoMiliton}. A generalization of both statements is given in \cite[Theorem 1.3]{malicetMiliton},  where it is shown that for any compact $K \subset \R$, the defining action of the group of locally monotone homeomorphisms of $K$ satisfies the alternative. It is notable that the proof in \cite{malicetMiliton} finds sufficiently proximal elements on a group $G$ that does not preserve a measure on $K$ by running a random walk on $G$, whereas the arguments in \cite{margulisCircle, hurtadoMiliton} are ``deterministic''. Groups acting by homeomorphisms on dendrites also satisfy the alternative by work of B. Duchesne and N. Monod \cite[Theorem 1.6]{DM}.

A second family of examples consists of groups acting on the boundary of Gromov-hyperbolic spaces: a first elementary instance of this class is the action of automorphism group of a locally finite tree $\cT$ on its boundary $\partial \cT$, as follows easily from the well-known dynamical classification of subgroups of $\mathrm{Aut}(\cT)$, see \cite{titsArbre}. More generally, if $(M,d)$ is a Gromov-hyperbolic and proper metric space such that $\mathrm{Isom}(M,d)$ acts cocompactly on $M$, then the action of $\mathrm{Isom}(M,d)$ on the Gromov boundary $\partial M$ satisfies the dynamical Tits alternative (see \cite{CCMT}, and also \cite[Theorem 1.10]{aounSert} for a probabilistic version).
\end{ejms}

This note is concerned with almost automorphism groups of locally finite trees, which is a large family of locally compact and totally disconnected groups that arise as follows: let $\cT$ be a locally finite rooted tree and $\partial \cT$ be its space of ends. The group of rooted tree automorphisms $\mathrm{Aut}_\mathrm{r}(\cT)$ acts on $\partial \cT$ preserving the so-called visual metric. The group $\mathrm{AAut}(\cT)$ of almost automorphisms of $\partial \cT$ consists of all homeomorphisms of $\partial \cT$ that are local homothecies for this metric, that is, that locally rescale it. The natural group topology on $\mathrm{AAut}(\cT)$ is not the compact-open topology, but the unique group topology such that $\mathrm{Aut}(\cT)$ is a compact open subgroup of $\mathrm{AAut}(\cT)$. See Section \ref{sec:preliminaries} for more precise statements. 

If $d \geq 2$, $k \geq 1$ and $\cT_{d,k}$ is the rooted tree where the root has $k$ children and all other vertices have $d$ children, then $\mathrm{AAut}(\cT_{d,k})$ is known as a Neretin group. These groups are known to be simple \cite{kapoudjian}, compactly presented \cite{leBoudecCompact} and to contain the Higman-Thompson group $V_{d,k}$ as a dense subgroup. They are the first examples of compactly generated simple groups without lattices \cite{BCGM} and moreover admit no invariant random subgroups by a result of T. Zheng \cite{zheng}.

We show that the action of $\mathrm{AAut}(\cT)$ on $\partial \cT$ satisfies the dynamical Tits alternative for any locally finite rooted tree $\cT$.

\begin{thmA} \label{teo:neretin}
Let $\cT$ be a locally finite rooted tree. The action of $\mathrm{AAut}(\cT)$ on the boundary $\partial \cT$ satisfies the dynamical Tits alternative.
\end{thmA}

Some interesting groups to which Theorem \ref{teo:neretin} applies are the groups $V_G$ considered by V. Nekrashevych in \cite{nekrashevych}, where $G \leq \mathrm{Aut}_\mathrm{r}(\cT_{2,2})$ is a self-similar group. Here $V_G$ is the subgroup of $\mathrm{AAut}(\cT_{2,2})$ generated by $G$ and Higman-Thompson's $V$.

Fix a linear order on $\partial \cT$ that is compatible with the tree structure. We denote by $V_{\cT}$ the group of elements of $\mathrm{AAut}(\cT)$ acting in a locally order-preserving manner. For instance, $V_{\cT_{d,k}}$ is the Higman-Thompson group associated to $\cT_{d,k}$, and all topological full groups of irreducible infinite one-sided shifts of finite type are naturally subgroups of some $V_\cT$, see \cite[Theorem 3.8]{lederleV}. We call $V_\cT$ the \emph{Higman-Thompson group associated to $\cT$}, although this name is not standard. The proof of Theorem \ref{teo:V} below gives a shorter and hopefully more conceptual approach to \cite[Theorem 1.3]{hurtadoMiliton} when specialized to the Higman-Thompson group $V$.

\begin{thmA} \label{teo:V}
Let $H$ be a finitely generated subgroup of $V_\cT$. Then the action of $H$ on $\partial \cT$ has a finite orbit or admits a ping-pong pair.
\end{thmA}

We emphasize that Theorem \ref{teo:V} is not new, since the proof of \cite[Theorem 1.3]{hurtadoMiliton} shows that $V$ verifies the (slightly stronger) conclusion of Theorem \ref{teo:V} and general arguments allow to extend the result from $V$ to any group $V_\cT$.

Two important ingredients in both proofs are a characterization of relatively compact subgroups of $\mathrm{AAut}(\cT)$ by A. Le Boudec and P. Wesolek \cite{leBoudecWesolek} and a description of the dynamics of individual elements of $\mathrm{AAut}(\cT)$ by G. Goffer and W. Lederle \cite{GL} (building on work of O. Salazar-Díaz \cite{salazarDiaz}).

\subsection*{Acknowledgements}
The author wishes to thank Gil Goffer, Adrien Le Boudec, Waltraud Lederle, Nicolás Matte Bon, Mikael de la Salle and the anonymous referee for useful comments on a previous draft of this preprint. The author also thanks his advisor Nicolás Matte Bon for advice and encouragement, and Alejandra Garrido for pointing out \cite{GL}.

\section{Preliminaries} \label{sec:preliminaries}
We give some background on $\mathrm{AAut}(\cT)$, describe the dynamics of individual elements of $\mathrm{AAut}(\cT)$, recall the definition of the Vietoris topology on closed subsets of a space and fix some notation. For more details on this material, see \cite{GL, leBoudecWesolek, garncarekLazarovich}.

\subsection*{Notation}
Given a metric space $(X,d)$, $A \subseteq X$ and $\epsilon > 0$, we denote $A^\epsilon = \{x \in X \mid d(x,A)\leq \epsilon\}.$ When $\cT$ is a rooted tree, we write $\mathrm{Aut}_\mathrm{r}(\cT)$ for the group of tree automorphisms of $\cT$ that fix the root.

\subsection*{Almost automorphism groups of trees}
Let $\cT$ be a locally finite rooted tree with no leaves. We denote its root by $v_0$, and assume that all edges are directed away from $v_0$. A \emph{caret} is a subtree of $\cT$ consisting of a vertex, its children, and the edges between them. A subtree is \emph{complete} if it is a union of carets, and when $\cT_1, \cT_2$ are rooted complete subtrees of $\cT$ we denote by $\cT_2 \setminus \cT_1$ the union of all carets in $\cT_2$ that are not included in $\cT_1$. The set of leaves of a tree $\cT$ is denoted by $\cL \cT$.

Let $\partial \cT$ be the \emph{space of ends} of $\cT$, that is, the set of (one-sided) infinite directed paths starting at $v_0$. We equip $\partial \cT$ with the topology induced by the \emph{visual metric} defined as $d(\xi, \xi') = 2^{-N(\xi, \xi')}$ for $\xi, \xi' \in \partial \cT$ where $N(\xi, \xi') \in \N$ is the smallest integer $n$ such that $\xi_n \neq \xi'_n$. The space $(\partial \cT,d)$ is totally disconnected and compact, and its topology has a basis of clopen balls $\partial \cT_v = \{\xi \in \partial \cT \mid v \text{ is in } \xi\}$ where $v \in V(\cT)$.

An \emph{almost automorphism} of $\cT$ is a homeomorphism $g$ of $\partial \cT$ such that there exists a partition of $\partial \cT$ into clopen balls $D_1, \ldots, D_n$ and positive numbers $\lambda_1, \ldots, \lambda_n$ such that $d(g(y), g(z)) = \lambda_j d(y,z)$ for all $y,z \in D_j$. Such a partition is said to be \emph{admissible} for $g$. Another way of viewing an almost automorphism is the following: take $\cT_1, \cT_2$ finite subtrees of $\cT$ with root $v_0$, so $\cT \setminus \cT_1$, $\cT \setminus \cT_2$ are naturally rooted forests. Then any isomorphism $\overline{g} \colon \cT \setminus \cT_1 \to \cT \setminus \cT_2$ of rooted forests determines a $g \in \mathrm{AAut}(\cT)$, and conversely any almost automorphism arises in this manner, although not uniquely so.

Call an almost automorphism $g \in \mathrm{AAut}(\cT)$ \emph{elliptic} if there exists a partition $\cP$ of $\partial \cT$ into clopen balls that is admissible for $g$ and $g$-invariant, that is, such that $g\cP = \cP$. The following theorem is stated for Neretin groups in \cite{leBoudecWesolek}, but the proof for any locally finite tree $\cT$ is the same word by word.

\begin{prop}[{\cite[Corollary 3.6]{leBoudecWesolek}}] \label{prop:elliptic}
Let $H \leq \mathrm{AAut}(\cT)$ be finitely generated. The following are equivalent.
\begin{itemize}
	\item $H$ is relatively compact in $\mathrm{AAut}(\cT)$.
	\item Every element of $H$ is elliptic.
	\item There is a partition $\cP$ of $\partial \cT$ into clopen balls such that $\cP$ is admissible for every $h \in H$.
\end{itemize}
Moreover, if these conditions hold and $\cQ$ is any partition of $\cP$ into clopen balls such that $\cQ$ is admissible for every element of some generating set of $H$, then $\cP$ can be chosen to be finer than $\cQ$.
\end{prop}

Fix a family of total linear orders $\{<_v\}_{v \in V(\cT)}$ indexed by the vertices of $\cT$, where $<_v$ is an order on the children of $v \in V(\cT)$. This family induces a linear order $<$ on $\partial \cT$ by declaring $\xi < \xi'$ if $\xi, \xi' \in \partial \cT$ and $\xi_n <_{\xi_{n-1}} \xi'_n$, where $n = N(\xi, \xi') \in \N$. We call $g \in \mathrm{AAut}(\cT)$ a \emph{Higman-Thompson element} if there exists an admissible partition $\cP$ for $g$ such that $\restr{g}{D}$ is order-preserving for all $D \in \cP$. The subset of Higman-Thompson elements of $\mathrm{AAut}(\cT)$ is a group that we denote by $V_\cT$. We omit $\{<_v\}_{v \in V(\cT)}$ from the definition of $V_\cT$ to keep the notation uncluttered, but for non-regular trees $\cT$ the group $V_\cT$ may depend on the choice of the orders $\{<_v\}_{v \in V(\cT)}$.

Since the elliptic elements of $V_\cT$ are exactly the elements of finite order in $V_\cT$, as in \cite{leBoudecWesolek} Proposition \ref{prop:elliptic} yields the following result.

\begin{cor}[{\cite[Corollary 3.7]{leBoudecWesolek}}] \label{cor:torsion}
Any finitely generated subgroup of $V_\cT$ composed entirely of elliptic elements is finite.
\end{cor}

\subsection*{Dynamics of almost automorphisms}
We will make use of a description of the dynamics of individual elements of $\mathrm{AAut}(\cT)$, which is one of the subjects of \cite{GL}. Again, the proofs are given for $\cT = \cT_{d,k}$ but a careful reading shows that all arguments hold in the general case. 

 We need some definitions, following \cite{GL}: a \emph{tree pair} is a tuple $(\kappa, \cT_1, \cT_2)$ where $\cT_1, \cT_2 \subset \cT$ are finite complete trees with root $v_0$ and $\kappa \colon \cL \cT_1 \to \cL \cT_2$ is a bijection between the leaves of $\cT_1$ and the leaves of $\cT_2$. We say that a tree pair $(\kappa, \cT_1, \cT_2)$ is \emph{associated} to an element $g \in \mathrm{AAut}(\cT)$ if $g$ arises from an isomorphism of rooted forests $\overline{g} \colon \cT \setminus \cT_1 \to \cT \setminus \cT_2$ such that $\kappa = \restr{\overline{g}}{\cL \cT_1}$. In this case  $\{\partial \cT_v\}_{v \in \cL \cT_1}$ is an admissible partition for $g$. We will only consider tree pairs that are associated to some almost automorphism of $\cT$. All tree pairs are associated to almost automorphisms when $\cT = \cT_{d,k}$ but this is not true in general since the connected components of $\cT \setminus \cT_1$ and $\cT \setminus \cT_2$ need not be isomorphic.

Consider an orbit $\cO = \{u_0,\ldots, u_n\} \subseteq \cL \cT_1 \cup \cL \cT_2$ of the partial action of $\kappa$ on $\cL \cT_1 \cup \cL \cT_2$, that is, $\cO$ is such that $u_0,\ldots, u_{n-1} \in \cL \cT_1,\, u_1,\ldots, u_n \in \cL \cT_2$, $\kappa(u_j) = u_{j+1}$ for all $j = 0,\ldots, n-1$ and either
\begin{itemize}
	\item $u_0 \notin \cL \cT_2$ and $u_n \notin \cL \cT_1$, or
	\item $u_0 \in \cL \cT_2,\, u_n \in \cL \cT_1$ and $\kappa(u_n) = u_0$.
\end{itemize}
The orbit $\cO$ is said to be
\begin{itemize}
	\item an \emph{attracting chain} if $u_n$ is a descendant of $u_0$ in $\cT$, in which case $u_n$ is called the \emph{attractor} of the orbit,
	\item a \emph{repelling chain} if $u_0$ is a descendant of $u_n$ in $\cT$, in which case $u_0$ is called the \emph{repeller} of the orbit,
	\item a \emph{periodic chain} if $\kappa(u_n) = u_0$, and
	\item a \emph{wandering chain} if $u_0 \notin \cT_2$ (that is, $u_0$ is a descendant of some leaf in $\cL \cT_2$) and $u_n \notin \cT_1$ (that is, $u_n$ is a descendant of some leaf in $\cL \cT_1$).
\end{itemize}
These options are mutually exclusive. We say that $(\kappa, \cT_1, \cT_2)$ is a \emph{revealing tree pair} if each connected component of $\cT_1 \setminus \cT_2$ contains a repeller and each connected component of $\cT_2 \setminus \cT_1$ contains an attractor. In this case, if $C \subseteq \cT_1 \setminus \cT_2$ is a connected component containing a repeller $u_0$, the orbit $\{u_0,\ldots, u_n\}$ is such that $u_n$ is the root of $C$ and $u_0$ is the unique repeller in $C$. Similarly, if $C \subseteq \cT_2 \setminus \cT_1$ is a connected component containing an attractor $u_n$, the orbit $\{u_0, \ldots, u_n\}$ is such that $u_0$ is the root of $C$ and $u_n$ is the unique attractor in $C$. Revealing tree pairs were introduced by O. Salazar-Díaz in \cite{salazarDiaz} to describe the dynamics of individual elements of Thompson's $V$.

If $(\kappa, \cT_1, \cT_2)$ is a revealing tree pair, then every orbit $\cO$ is an attracting, repelling, periodic or wandering chain: assume that $\cO$ is not periodic, so it must end in an element $u_n \in \cL \cT_2 \setminus \cL \cT_1$ and begin in an element $u_0 \in \cL \cT_1 \setminus \cL \cT_2$. If $u_0 \in \cT_2$, then $u_0$ is the root of its component of $\cT_2 \setminus \cT_1$ and $u_n$ must be the attractor in this component. If $u_n \in \cT_1$, then $u_n$ is the root of its component in $\cT_1 \setminus \cT_2$ and $u_0$ must be the repeller in this component. If neither happen, then $\cO$ is wandering.

\begin{teo}[{\cite[Lemma 2.17]{GL}}]
Every element of $\mathrm{AAut}(\cT)$ is associated to some revealing tree pair.
\end{teo}

As a consequence we deduce the following corollary, the proof of which uses ideas present in \cite[Section 3.1]{GL} (compare \cite[Lemma 5.5]{hurtadoMiliton} for Higman-Thompson's group $V$). 

\begin{cor} \label{cor:dyn}
If $g \in \mathrm{AAut}(\cT)$ there is a partition $\partial \cT = U_g \sqcup V_g$ into $g$-invariant clopen subsets such that $U_g$ is equal to the open subset of $\xi \in \partial \cT$ that admit a neighborhood $U \subseteq \partial \cT$ such that some positive power of $\restr{g}{U}$ is an isometry onto $U$. Moreover, the following properties are verified.
\begin{itemize}
	\item There is a positive power of $\restr{g}{U_g}$ that is an isometry (for the visual metric of $\partial \cT$).
	\item There are finitely many $g$-periodic points in $V_g$, which we denote $\mathrm{Per}_\mathrm{hyp}(g)$.
	\item There is a partition $\mathrm{Per}_\mathrm{hyp}(g) = \mathrm{Per}_\mathrm{rep}(g) \sqcup \mathrm{Per}_\mathrm{att}(g)$ into \emph{repelling} and \emph{attracting} periodic points, such that for every $\epsilon > 0$, there exists $N \in \N$ so that for all $k \geq N$ we have
	\begin{equation} \label{eq:hyp}
		g^k(V_g \setminus \mathrm{Per}_\mathrm{rep}(g)^\epsilon ) \subseteq \mathrm{Per}_\mathrm{att}(g)^\epsilon \quad \text{ and } \quad  g^{-k}(V_g \setminus \mathrm{Per}_\mathrm{att}(g)^\epsilon ) \subseteq \mathrm{Per}_\mathrm{rep}(g)^\epsilon.
	\end{equation}
\end{itemize}
\end{cor}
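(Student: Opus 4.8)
The plan is to read the decomposition off a revealing tree pair. By \cite[Lemma 2.17]{GL} fix a revealing tree pair $(\kappa,\cT_1,\cT_2)$ associated to $g$; recall that $g$ carries each ball $\partial\cT_v$, $v\in\cL\cT_1$, onto $\partial\cT_{\kappa(v)}$ while scaling the visual metric by the fixed ratio $2^{|v|-|\kappa(v)|}$, $|{\cdot}|$ denoting depth. Let $P\subseteq\cL\cT_1\cap\cL\cT_2$ be the set of leaves lying on periodic chains; since $\kappa(P)=P$ the set $U_g:=\bigcup_{v\in P}\partial\cT_v$ is clopen and $g$-invariant, and so is its complement $V_g$. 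On a periodic chain of length $\ell$ the composite $g^{\ell}$ fixes each of its balls setwise with total scaling ratio $2^{0}=1$, hence acts isometrically there; taking $N$ a common multiple of the lengths of all periodic chains, $g^{N}|_{U_g}$ is an isometry, and moreover every point of $U_g$ has a clopen $g^{N}$-invariant neighbourhood (the union of the balls of its periodic chain) on which $g^{N}$ is an isometry.

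Next I would extract the hyperbolic periodic points. For an attracting chain $\{u_0,\dots,u_n\}$ the attractor $u_n$ is a proper descendant of $u_0$, the root of its component of $\cT_2\setminus\cT_1$; hence $g^{n}$ maps $\partial\cT_{u_0}$ onto $\partial\cT_{u_n}\subsetneq\partial\cT_{u_0}$ as a strict contraction, with a unique fixed point $q$. Let $\mathrm{Per}_\mathrm{att}(g)$ be the union of the $g$-orbits of these points over all attracting chains: it is finite, and contained in $V_g$ since $u_0\in\cL\cT_1\setminus P$. Dually, for a repelling chain $g^{-n}$ is a strict contraction $\partial\cT_{u_n}\to\partial\cT_{u_0}$ with a fixed point $p$, and $\mathrm{Per}_\mathrm{rep}(g)$ is the (finite) union of the $g$-orbits of these points; no point is simultaneously attracting and repelling, so $\mathrm{Per}_\mathrm{hyp}(g):=\mathrm{Per}_\mathrm{att}(g)\sqcup\mathrm{Per}_\mathrm{rep}(g)\subseteq V_g$ is a disjoint union.

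The assertions about the dynamics on $V_g$ would follow by tracking the itinerary of a point through the partition $\{\partial\cT_v\}_{v\in\cL\cT_1}$, as in \cite[Section 3.1]{GL} (compare \cite[Lemma 5.5]{hurtadoMiliton}). A point of $\partial\cT_{u_j}$ on an attracting chain reaches $\partial\cT_{u_n}$ after $n-j$ steps and then, as $\partial\cT_{u_n}\subseteq\partial\cT_{u_0}$ and $g^{n}|_{\partial\cT_{u_0}}$ contracts onto $\partial\cT_{u_n}$, its forward orbit converges to $q$. On a wandering chain the point lands after $n-j$ steps inside $\partial\cT_w$ for some leaf $w$ of $\cT_1$, which cannot lie on a periodic chain because $V_g$ is $g$-invariant, so the orbit continues along another non-periodic chain; since the tree pair is finite one checks that after finitely many such transitions the orbit enters an attracting component. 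On a repelling chain the point is either in $\mathrm{Per}_\mathrm{rep}(g)$ or is ejected from $\partial\cT_{u_n}$ by $g^{n}$ in finitely many steps into a ball that begins a new non-periodic chain, whereupon one recurses. Thus every $\xi\in V_g\setminus\mathrm{Per}_\mathrm{rep}(g)$ satisfies $g^{n}\xi\to\mathrm{Per}_\mathrm{att}(g)$, and dually. The uniform estimate \eqref{eq:hyp} then follows by compactness: $V_g\setminus\mathrm{Per}_\mathrm{rep}(g)^{\epsilon}$ is compact and covered by finitely many open sets each sent by a fixed power of $g$ into a $g$-forward-invariant clopen neighbourhood of $\mathrm{Per}_\mathrm{att}(g)$ lying inside $\mathrm{Per}_\mathrm{att}(g)^{\epsilon}$ (got by saturating a small neighbourhood of $\mathrm{Per}_\mathrm{att}(g)$ under $g^{0},\dots,g^{m_0-1}$ for $m_0$ a common period), and $N$ is taken to be the largest of these powers; the second inclusion is symmetric. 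That the only $g$-periodic points of $V_g$ are those of $\mathrm{Per}_\mathrm{hyp}(g)$ is then immediate from \eqref{eq:hyp} as $\epsilon\to0$.

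Finally, for the characterisation of $U_g$: one inclusion is the last remark of the first paragraph. Conversely, if $\xi$ has a clopen neighbourhood $U$ with $g^{m}(U)=U$ and $g^{m}|_{U}$ an isometry, then $g^{mk}|_{U}$ is an isometry for all $k\ge1$; if $U$ met $V_g$, pick a ball $\partial\cT_w\subseteq U\cap V_g$ disjoint from $\mathrm{Per}_\mathrm{rep}(g)^{\epsilon}$ for small $\epsilon$, and note that by \eqref{eq:hyp} we have $g^{mk}(\partial\cT_w)\subseteq\mathrm{Per}_\mathrm{att}(g)^{\epsilon}$ for large $k$, while $g^{mk}(\partial\cT_w)$ is a ball of the same positive radius as $\partial\cT_w$, which cannot fit inside $\mathrm{Per}_\mathrm{att}(g)^{\epsilon}$ once $\epsilon$ is small enough — a contradiction; hence $U\subseteq U_g$, and in passing this shows $U_g$ is independent of the chosen revealing tree pair. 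The step I expect to be the main obstacle is the itinerary analysis of the third paragraph, namely verifying that wandering and repelling orbits in $V_g$ must eventually enter an attracting component; this is where finiteness of the revealing tree pair and the invariance of $V_g$ are essential, and it is precisely what \cite[Section 3.1]{GL} provides.
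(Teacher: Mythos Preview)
Your approach is essentially the paper's: read everything off a revealing tree pair, let $U_g$ be the union of balls on periodic chains, extract the hyperbolic periodic points from the attracting and repelling chains, and track itineraries for the uniform contraction estimate. The itinerary analysis and the compactness argument for \eqref{eq:hyp} are fine.

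There is, however, a genuine gap in the final ``Moreover'' paragraph. Your contradiction argument for the converse inclusion assumes $g^{m}(U)=U$, which is not part of the hypothesis (the statement only gives a neighbourhood on which a single power of $g$ is an isometry, with no invariance), and you need this invariance to conclude that $g^{mk}(\partial\cT_w)$ keeps the same radius for all $k$. More seriously, your contradiction ``a ball of the same positive radius \dots\ cannot fit inside $\mathrm{Per}_\mathrm{att}(g)^{\epsilon}$'' breaks down when $\partial\cT_w$ is a singleton; and if $\partial\cT$ has isolated points (which can happen for a general locally finite leafless tree), an isolated point of $\partial\cT$ lying in $\mathrm{Per}_\mathrm{hyp}(g)$ \emph{does} admit a neighbourhood (itself) on which every power of $g$ is an isometry, yet it lies in your $V_g$. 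So with your definition of $U_g$ the characterisation is simply false in general, and no contradiction argument can succeed. The paper deals with exactly this issue by, at the very end, \emph{redefining} $U_g$ and $V_g$ so as to transfer the finitely many hyperbolic periodic points that are isolated in $V_g$ over to $U_g$; after this adjustment all earlier properties persist and the characterisation holds. You should incorporate this redefinition rather than attempt a direct contradiction.
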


\begin{proof}
Let $(\kappa, \cT_1, \cT_2)$ be a revealing pair associated to $g$. Write $\widetilde{U}_g = \bigsqcup \partial \cT_u$ where the union is over all vertices $u \in \cL \cT_1 \cup \cL \cT_2$ that are in periodic chains, and set $\widetilde{V}_g = \partial \cT \setminus \widetilde{U}_g$. The sets $\widetilde{U}_g$ and $\widetilde{V}_g$ are $g$-invariant and clopen, and if $\partial \cT_u \subseteq \widetilde{U}_g$ where $u \in \cL \cT_1 \cup \cL \cT_2$, there exists $n \in \N$ such that $g^n(\partial \cT_u) = \partial \cT_u$ and $\restr{g^n}{\partial \cT_u}$ is an isometry. By taking appropriate powers of $g$ we see that there is a $k \in \N$ such that $\restr{g^k}{\widetilde{U}_g}$ is an isometry.

Now let $\partial \cT_u \subseteq \widetilde{V}_g$ where $u \in \cL \cT_1 \cup \cL \cT_2$ is in an attracting chain $\{u_0,\ldots, u_n\}$. Then $g^n(\partial \cT_{u_0}) \subsetneq \partial \cT_{u_0}$ and $\restr{g^n}{\partial \cT_{u_0}}$ is a contraction, so in particular $g^n(\partial \cT_u) \subsetneq \partial \cT_u$ and $\restr{g^n}{\partial \cT_u}$ is a contraction. Thus there exists a unique $g^n$-fixed point $\xi_u \in \partial \cT_u$, and it also verifies $\bigcap_{k \in \N}g^{kn}(\partial \cT_u) = \{\xi_u\}$. Set 
\[
	\mathrm{Per}_\mathrm{att}(g) = \{\xi_u \mid u \text{ is in an attracting chain}\}.
\] If $\partial \cT_u \subseteq \widetilde{V}_g$ where $u \in \cL\cT_1 \cup \cL \cT_2$ is in a repelling chain instead, the same argument applied to $g^{-1}$ shows that for some $n \in \N$ there exists a unique $g^n$-fixed point $\xi_u \in \partial \cT_u$ such that $\bigcap_{k \in \N} g^{-kn}(\partial \cT_u) = \{\xi_u\}$. Set 
\[
	\mathrm{Per}_\mathrm{rep}(g) = \{\xi_u \mid u \text{ is in a repelling chain}\}.
\]

The sets $\mathrm{Per}_\mathrm{att}(g)$ and $\mathrm{Per}_\mathrm{rep}(g)$ are disjoint and finite. Moreover, their union gives all $g$-periodic points in $\widetilde{V}_g$ since any $\xi \in \partial \cT_u$ where $u \in \cL \cT_1 \cup \cL \cT_2$ is in a wandering chain cannot be a $g$-periodic point: indeed, if the orbit $\{u_0,\ldots, u_n\}$ is wandering, then the connected component of $u_n \in \cL \cT_2 \setminus \cT_1$ in $\cT_2 \setminus \cT_1$ has a root $u_\mathrm{a} \in \cL \cT_1$ which must be the first element of an attracting orbit. Thus 
\[
	d\left(g^{n - k + j}\left(\partial \cT_{u_k}\right), g^j\left(\xi_{u_\mathrm{a}}\right) \right) \xrightarrow{j \to \infty} 0
\] for every $k = 0,\ldots, n$. In the same way, the root $u_\mathrm{r} \in \cL \cT_2$ of the connected component of $u_0 \in \cL \cT_1 \setminus \cT_2$ in $\cT_1 \setminus \cT_2$ is the last element of a repelling orbit, so 
\[
	d\left(g^{- k - j}\left(\partial \cT_{u_k} \right) , g^{-j}\left(\xi_{u_\mathrm{r}}\right) \right) \xrightarrow{j \to \infty} 0
\] for every $k = 0, \ldots, n$. Since $\xi_{u_\mathrm{r}} \neq \xi_{u_\mathrm{a}}$, there are no periodic points in any $\partial \cT_{u_k}$ for $k = 0, \ldots, n$.

To prove that $\widetilde{V}_g,\, \mathrm{Per}_\mathrm{att}(g), \, \mathrm{Per}_\mathrm{rep}(g)$ verify the last item of the corollary, by symmetry it suffices to prove the first statement in \eqref{eq:hyp}. To do this, take a clopen $U \subsetneq \widetilde{V}_g \setminus \mathrm{Per}_\mathrm{rep}(g)$ and  $\epsilon > 0$, and let $u \in \cL \cT_1 \cup \cL \cT_2$. If $u$ is in an attracting or wandering chain, it is clear that there is an $N \in \N$ such that $g^k(U\cap \partial \cT_u) \subseteq \mathrm{Per}_\mathrm{att}(g)^\epsilon$ for all $k \geq N$. If $u$ is in a repelling orbit $\{u_0,\ldots, u_n\}$, then the equality 
\[
	\bigcap_{k \in \N}g^{-kn}(\partial \cT_u) \cap (U\cap \partial \cT_u) = \emptyset
\] shows that there is a $l \in \N$ such that $g^{-kn}(\partial \cT_u) \cap (U\cap \partial \cT_u) = \emptyset$ for all $k \geq l$. The sets $g^r(\partial \cT_u),\, r = 0, \ldots, n-1$ are pairwise disjoint, and we conclude that $\partial \cT_u \cap g^k(U \cap \partial \cT_u) = \emptyset$ for all $k \geq nl$. Upon replacing $U$ by $g^r(U)$ for some sufficiently big $r \in \N$ we may assume that $U \cap \partial \cT_u$ is empty for all $u \in \cL \cT_1 \cup \cL \cT_2$ in a repelling orbit. Hence \eqref{eq:hyp} holds for a sufficiently large $N \in \N$.

Up to now, $\widetilde{U}_g,\, \widetilde{V}_g$ satisfy all the required properties except that $\widetilde{V}_g$ could still contain some points that admit a neighborhood $U \subseteq \partial \cT$ such that some positive power of $\restr{g}{U}$ is an isometry onto $U$. Call the set of such points $\cI \subseteq \widetilde{V}_g$. All elements in $\widetilde{V}_g \setminus (\mathrm{Per}_\mathrm{rep}(g) \sqcup \mathrm{Per}_\mathrm{att}(g))$ admit a neighborhood $U$ such that $g^j(U) \cap U = \emptyset$ for all $j \neq 0$, so $\cI \subseteq \mathrm{Per}_\mathrm{rep}(g) \sqcup \mathrm{Per}_\mathrm{att}(g)$ is finite. Thus setting $U_g = \widetilde{U}_g \sqcup \cI$, $V_g = \widetilde{V}_g \setminus  \cI$ ensures that all properties in the statement of the corollary are verified.
\end{proof}
In contrast with the trees $\cT_{d,k}$, the boundary of an arbitrary locally finite tree $\cT$ may contain isolated points and the three items in the previous corollary do not specify $U_g, V_g$ uniquely. The definition of $U_g$ in the statement of the corollary gives a canonical definition in any case.

\subsection*{Vietoris topology}
Given a topological space $X$, we denote by $2^X$ the space of closed subsets of $X$ equipped with the \emph{Vietoris topology}, defined as the topology with subbasis
\[
	\{K \in 2^X \mid K \cap U \neq \emptyset,\, K \cap V = \emptyset\}
\] where $U,V$ range over all open subsets of $X$. If $X$ is compact metrizable then $2^X$ is also compact metrizable \cite[Section 4.5]{engelking}, and in this case any action by homeomorphisms of a countable group on $X$ induces naturally an action by homeomorphisms on $2^X$ \cite[Section 3.12]{engelking}.

\section{Proofs}
We first gather some useful lemmas.

\begin{lema}[{Neumann's lemma, \cite[Lemma 4.1]{neumann}}] \label{lema:neumann}
Let $G$ be a group acting on a set $X$ and assume that the action has no finite orbits.	Then for every pair of finite subsets $A,B$ of $X$ there exists an element $g \in G$ such that $g(A) \cap B$ is empty.
\end{lema}

\begin{lema}[{\cite[Proposition 1.16]{malicetMiliton}}] \label{lema:libre}
Let $G$ be a group acting by homeomorphisms on a compact metric space $(X,d)$. Assume that
\begin{enumerate}
	\item \label{item:pingPong1} there exists a positive integer $p$ such that for any $\varepsilon > 0$ there exist nonempty finite sets $A,B \subset X$ of cardinality at most $p$ with $g(X \setminus A^\epsilon) \subseteq B^\epsilon$ for some $g \in G$, and
	\item \label{item:pingPong2} there are no finite $G$-orbits.
\end{enumerate}
Then the action of $G$ on $X$ has a ping-pong pair.
\end{lema}

\begin{proof}
Notice that condition \eqref{item:pingPong1} is equivalent to the existence of finite sets $A,B$ of $X$ that work for \emph{any} $\epsilon >0$: a pair $(A,B)$ of nonempty subsets of $X$ of cardinality at most $p$ is called a \textit{contraction pair} if for every neighborhood $U, V$ of $A,B$ respectively there exists $g \in G$ such that $g(X\setminus U) \subset V$. Then condition \eqref{item:pingPong1} and the compactness of $X$ imply that there exists a contraction pair.

Moreover, if $(A,B)$ is a contraction pair and $u,v \in G$ then $(u(A), v(B))$ is a contraction pair too. Thus, by \eqref{item:pingPong2} and Lemma \ref{lema:neumann}, our contraction pair $(A,B)$ can be taken such that $A$ and $B$ are disjoint.

Using \eqref{item:pingPong2} and Lemma \ref{lema:neumann} again we deduce that there exist two contraction pairs $(A_1, B_1), (A_2, B_2)$ where the $A_1, A_2, B_1, B_2$ are pairwise disjoint. If $U_i, V_i, i = 1,2$ are pairwise disjoint neighborhoods of $A_i, B_i, i = 1,2$ respectively, we can find $g_1, g_2 \in G$ such that $g_i(X \setminus U_i) \subseteq V_i,\, i = 1,2$. These constitute a ping-pong pair.
\end{proof}

\begin{lema} \label{lema:rec}
Let $G$ be a compact topological group and $g \in G$. For every neighborhood $U$ of the identity there exists a strictly increasing sequence $\{n_j\}_{j \in \N} \subseteq \N$ such that $g^{n_j} \in U$ for all $j \in \N$.
\end{lema}

\begin{proof}
Let $\mu$ be the normalized Haar measure on $G$, so that $\mu$ is a probability measure of complete support on $G$ and left multiplication by $g$ preserves $\mu$. Let $V \subseteq U$ be an open neighborhood of the identity such that $V \cdot V^{-1} \subseteq U$. Since $\mu(V) > 0$, Poincaré's recurrence theorem implies that there is a sequence $\{n_j\}_{j \in \N} \subseteq \N$ such that $\mu(V \cap g^{n_j}V) > 0$ for all $j \in \N$. In particular $g^{n_j} \in V\cdot V^{-1} \subseteq U$ for all $j \in \N$.
\end{proof}

The core of the proof of Theorem \ref{teo:neretin} is the following statement, which uses ideas from \cite[Proposition 5.2]{leBoudecMatteBon} and implies the extreme contraction properties required by Lemma \ref{lema:libre}. If $g \in \mathrm{AAut}(\cT)$ we use the same notation as Corollary \ref{cor:dyn}, so we write $U_g \subseteq \partial \cT$ for the stable set of $g$ and $\mathrm{Per}_\mathrm{hyp}(g),\, \mathrm{Per}_\mathrm{rep}(g)$ for the hyperbolic and repelling points of $g$ respectively.

Recall that we denote by $2^{\partial \cT}$ the compact metrizable space of all closed subsets of $\partial \cT$ equipped with the Vietoris topology.

\begin{prop} \label{prop:proximal}
Let $H \leq \mathrm{AAut}(\cT)$ and assume that $\bigcap_{h \in H}U_h$ is empty. Then there exists a finite set $B \subset \partial \cT$ such that for every $\epsilon > 0$ there is a $h \in H$ with $h(\partial \cT \setminus B^\epsilon) \subseteq B^\epsilon$.
\end{prop}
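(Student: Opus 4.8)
The plan is: reduce to a finitely generated $H$ by compactness; take $B$ to be the finite set of all hyperbolic periodic points of finitely many generators; and build the desired $h$ as an ordered product of high (positive \emph{or} negative) powers of these generators, using the dynamics of Corollary \ref{cor:dyn} to contract the hyperbolic parts and the recurrence Lemma \ref{lema:rec} to keep the elliptic parts under control.

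\textbf{Step 1 (reduction).} Each set $U_h$ is open (Corollary \ref{cor:dyn}), so the hypothesis $\bigcap_{h\in H}U_h=\emptyset$ says exactly that the clopen sets $V_h:=\partial\cT\setminus U_h$ form an open cover of the compact space $\partial\cT$. Extract a finite subcover: there are $h_1,\dots,h_m\in H$ with $\bigcap_{i=1}^m U_{h_i}=\emptyset$. Replacing each $h_i$ by a suitable power we may also assume that $h_i$ acts on $U_{h_i}$ by an isometry (Corollary \ref{cor:dyn}); this leaves $U_{h_i}$, $\mathrm{Per}_\mathrm{att}(h_i)$, $\mathrm{Per}_\mathrm{rep}(h_i)$ and the equality $\bigcap_i U_{h_i}=\emptyset$ unchanged. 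Put $B=\bigcup_{i=1}^m\bigl(\mathrm{Per}_\mathrm{att}(h_i)\cup\mathrm{Per}_\mathrm{rep}(h_i)\bigr)$, a finite set depending only on the $h_i$. Since $B^{\epsilon'}\subseteq B^{\epsilon}$ for $\epsilon'\le\epsilon$, it is enough to treat all sufficiently small $\epsilon>0$.

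\textbf{Step 2 (the contracting element).} Fix a small $\epsilon>0$. We look for $h$ of the form $h=h_{j_m}^{\nu_m}\cdots h_{j_1}^{\nu_1}$, where $j_1,\dots,j_m$ is an ordering of $1,\dots,m$ and each $\nu_k$ is a nonzero integer of large absolute value whose sign is chosen carefully (see below). Set $Y_0=\partial\cT\setminus B^{\epsilon}$ and $Y_k=h_{j_k}^{\nu_k}(Y_{k-1})$; we want $Y_m\subseteq B^{\epsilon}$. We maintain the invariants that the ``uncontrolled'' part of $Y_k$ lies in $U_{h_{j_1}}\cap\cdots\cap U_{h_{j_k}}$ and stays at distance at least $\epsilon/2$ from $B$, while the rest of $Y_k$ lies in $B^{\epsilon}$. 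At step $k$ we split $Y_{k-1}$ along $\partial\cT=V_{h_{j_k}}\sqcup U_{h_{j_k}}$. The uncontrolled part in $V_{h_{j_k}}$ is at distance at least $\epsilon/2$ from $\mathrm{Per}_\mathrm{rep}(h_{j_k}^{\nu_k})\subseteq B$, so by \eqref{eq:hyp} of Corollary \ref{cor:dyn}, $h_{j_k}^{\nu_k}$ with $|\nu_k|$ large maps it into $\mathrm{Per}_\mathrm{att}(h_{j_k}^{\nu_k})^{\epsilon}\subseteq B^{\epsilon}$, hence into the controlled part. The uncontrolled part in $U_{h_{j_k}}$ is moved isometrically; choosing $\nu_k$ additionally by Lemma \ref{lema:rec} (recurrence of $h_{j_k}|_{U_{h_{j_k}}}$ in the compact group $\mathrm{Isom}(U_{h_{j_k}})$) to move points there by less than $\delta$, for a fixed $\delta$ small relative to the distances $d(U_{h_i},V_{h_i})>0$, it stays inside $U_{h_{j_1}}\cap\cdots\cap U_{h_{j_k}}$ and still at distance at least $\epsilon/2$ from $B$. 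After step $m$ the uncontrolled part lies in $\bigcap_{i=1}^m U_{h_i}=\emptyset$, so $Y_m\subseteq B^{\epsilon}$. The element $h$ lies in $H$, which proves the proposition.

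\textbf{Main obstacle.} The delicate point in Step 2 is keeping the ``controlled'' part of $Y_k$ inside $B^{\epsilon}$: when $h_{j_k}^{\nu_k}$ is applied, the points that an earlier factor $h_{j_l}^{\nu_l}$ ($l<k$) pushed into a tiny neighbourhood of an attracting point of $h_{j_l}^{\nu_l}$ must not be expanded away from $B$. This goes wrong precisely when such an attracting point is a \emph{repelling} point of $h_{j_k}^{\nu_k}$. Lemma \ref{lema:rec} disposes of the case when that point lies in the elliptic region $U_{h_{j_k}}$, and Corollary \ref{cor:dyn} disposes of the case when it is merely ``neutral'' for $h_{j_k}^{\nu_k}$ in $V_{h_{j_k}}$ (the point then drifts towards $\mathrm{Per}_\mathrm{att}(h_{j_k}^{\nu_k})\subseteq B$). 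So what remains is to choose the ordering $j_1,\dots,j_m$ and the signs of the $\nu_k$ so that no attracting point of an earlier factor is a repelling point of a later one; the role of allowing negative exponents is that flipping the sign of $\nu_k$ interchanges the attracting and repelling sets of that factor. Securing such a choice — and dealing with the combinatorial situations where no single choice of order and signs works, by enlarging the family of factors with suitable conjugates or further powers using again that $\bigcap_i U_{h_i}=\emptyset$ — is the technical heart of the argument; once it is in place, a bookkeeping of the neighbourhood scales (absorbing the $\delta$-errors into the $\epsilon$-slack) finishes the proof.
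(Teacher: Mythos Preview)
Your reduction in Step~1 and the broad plan of Step~2 match the paper's: pick finitely many $h_1,\dots,h_m$ with $\bigcap_i U_{h_i}=\emptyset$, pass to powers so each $\restr{h_i}{U_{h_i}}$ is an isometry, set $B=\bigcup_i\mathrm{Per}_\mathrm{hyp}(h_i)$, and iterate through the $h_i$ using contraction on $V_{h_i}$ and recurrence on $U_{h_i}$. The divergence is precisely at the ``main obstacle'' you flag, and there your argument has a genuine gap.

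The obstacle is real. If the controlled part of $Y_{k-1}$ contains a repeller $p$ of $h_{j_k}^{\nu_k}$ as a non-isolated point (which happens whenever $p\in Y_{k-1}$, since $Y_{k-1}$ is a homeomorphic image of the perfect clopen set $Y_0$), then for every fixed $\nu_k$ there are points of $Y_{k-1}$ so close to $p$ that $h_{j_k}^{\nu_k}$ carries them far from $B$: no single power works uniformly. Your proposed cure---choose the ordering and the signs so that no attractor of an earlier factor is a repeller of a later one, ``enlarging the family'' if necessary---is not carried out; you yourself call it ``the technical heart of the argument'' and leave it unresolved. It is not clear that such a choice exists in general (the attracting/repelling incidences among the $h_i$ can form arbitrary directed patterns), and the suggestion about conjugates and further powers is not developed into a procedure.

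The paper avoids this combinatorics entirely by working with limits in the Vietoris topology on closed subsets of $\partial\cT$. Rather than fixing one exponent at each stage, one applies a whole sequence $h_j^{n_l}$ with $\restr{h_j^{n_l}}{U_{h_j}}\to\mathrm{id}$ (Lemma~\ref{lema:rec}) and lets $C_{j+1}$ be a limit point of $\{h_j^{n_l}(C_j)\}_l$ inside $\overline{\mathrm{Orb}_H(C_1)}$. In the limit the controlled part of $C_{j+1}$ lies \emph{exactly} in the finite set $\mathrm{Per}_\mathrm{hyp}(h_j)$, not merely in a neighbourhood of it. At the next stage these finitely many points are periodic for $h_{j+1}$ (either in $U_{h_{j+1}}$, or as hyperbolic periodic points, which remain in $\mathrm{Per}_\mathrm{hyp}(h_{j+1})\subseteq B$), so the near-a-repeller blowup cannot occur. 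After $m$ steps $C_{m+1}\subseteq\mathrm{Per}_\mathrm{hyp}(h_m)\subseteq B$ lies in $\overline{\mathrm{Orb}_H(C_1)}$, and one recovers an actual $h\in H$ with $h(\partial\cT\setminus B^\epsilon)\subseteq B^\epsilon$. This passage to Vietoris limits is the idea missing from your proof.
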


\begin{proof}
By compactness there exist elements $h_1, \ldots, h_k \in H$ such that $\bigcap_{1\leq j \leq k}U_{h_j}$ is empty. By taking powers of the $h_j$ we can assume that every $\restr{h_j}{U_j}$ is an isometry. Set $B = \bigcup_{1\leq j \leq k}\mathrm{Per}_{\mathrm{hyp}}(h_j)$ and $C_1 = \partial \cT \setminus B^\epsilon$.

Now $h_1$ restricted to $U_{h_1}$ is an isometry. Since isometry groups of compact spaces are compact for the compact-open topology, Lemma \ref{lema:rec} and a diagonal argument shows that there exists a strictly increasing sequence $\{n_j\}_{j \in \N}\subseteq \N$ such that
\[
	\sup_{x \in U_{h_1}}d\left(h_1^{n_j}(x),x\right) \leq \frac{1}{j}
\] for all $j \in \N$. 

For any closed $D \subseteq \partial \cT$ denote by $\overline{\mathrm{Orb}_H(D)}$ the closure of the $H$-orbit of $D$ in $2^{\partial \cT}$. By taking a limit point of $\{h_1^{n_j}(C_1)\}_{j \in \N}$ in $2^{\partial \cT}$ we obtain a closed $C_2 \in \overline{\mathrm{Orb}_H(C_1)}$ with
\[
	C_2 \subseteq (C_1 \cap U_{h_1}) \sqcup \mathrm{Per}_{\mathrm{hyp}}(h_1).
\]

Lemma \ref{lema:rec} applied to $h_2$ gives again the existence of a sequence $\{n_j'\}_{j \in \N}\subseteq \N$ such that
\[
	\sup_{x \in U_{h_2}}d\left(h_2^{n_j'}(x), x\right) \leq \frac{1}{j}
\] for all $j \in \N$. The finite set $\mathrm{Per}_\mathrm{rep}(h_2)$ can only intersect $C_2$ in $\mathrm{Per}_{\mathrm{hyp}}(h_1)$, so by taking again a limit point of $\{h_2^{n_j'}(C_2)\}_{j \in \N}$ we obtain a closed $C_3 \in \overline{\mathrm{Orb}_H(C_2)} \subseteq \overline{\mathrm{Orb}_H(C_1)}$ with
\[
	C_3 \subseteq (C_2 \cap U_{h_2}) \sqcup \mathrm{Per}_{\mathrm{hyp}}(h_2) \subseteq (C_1 \cap U_{h_1}\cap U_{h_2}) \sqcup \left( \mathrm{Per}_{\mathrm{hyp}}(h_1) \cup \mathrm{Per}_\mathrm{hyp}(h_2)\right).
\] Notice that, again, the finite set $\mathrm{Per}_\mathrm{rep}(h_{3})$ can only intersect $C_3$ in $\mathrm{Per}_\mathrm{hyp}(h_1) \cup \mathrm{Per}_\mathrm{hyp}(h_2)$.

By iterating this argument we produce, for every $j = 2 ,\ldots, k+1$, a closed subset $C_j \in \overline{\mathrm{Orb}_H(C_{j-1})} \subseteq \overline{\mathrm{Orb}_H(C_1)}$ such that
\[
	C_j \subseteq (C_{j-1} \cap U_{h_{j-1}}) \sqcup \mathrm{Per}_\mathrm{hyp}(h_{j-1}) \subseteq (C_1 \cap U_{h_1} \cap \cdots \cap U_{h_{j-1}}) \sqcup \left( \bigcup_{1 \leq i \leq j-1}\mathrm{Per}_{\mathrm{hyp}}(h_i) \right).
\] In particular $C_{k+1} \subseteq \bigcup_{1 \leq i \leq k} \mathrm{Per}_{\mathrm{hyp}}(h_i) = B$, and we are done.
\end{proof}

\begin{proof}[\textbf{Proof of Theorem \ref{teo:neretin}}]
Take a subgroup $H \leq \mathrm{AAut}(\cT)$ and suppose that $H$ does not preserve a probability measure on $\partial \cT$. If every finitely generated subgroup of $H$ preserves a probability measure on $\partial \cT$, the compactness of $\mathrm{Prob}(\partial \cT)$ equipped with the weak-$\ast$ topology implies that $H$ preserves a probability measure on $\partial \cT$. We may assume then that $H$ is finitely generated.

Suppose that the set $U_H = \bigcap_{h \in H}U_h$ is nonempty. Since every $U_h$ is dynamically defined (see the definition of $U_h$ in Corollary \ref{cor:dyn}), we have that $U_H$ is $H$-invariant. The closed set $U_H$ may be written as $\partial \cT'$ for some rooted subtree $\cT' \subseteq \cT$, and we equip $U_H$ with the restriction of the visual metric of $\partial \cT$, which coincides with the visual metric of $\partial \cT'$. Every element of $H$ has a proper power that acts as an isometry on $\partial \cT'$, so the action of $H$ on $\partial \cT'$ is by elliptic almost automorphisms of $\cT'$. Since $H$ is finitely generated, Proposition \ref{prop:elliptic} shows that the image of $H$ in $\mathrm{AAut}(\cT')$ is relatively compact. Thus $H$ preserves a measure on $\partial \cT' \subseteq \partial \cT$, a contradiction. We conclude that $U_H$ is empty, and in this case Proposition \ref{prop:proximal} and Lemma \ref{lema:libre} imply that there exists a ping-pong pair for the action of $H$.
\end{proof}

\begin{rmk}
The proof of Theorem \ref{teo:neretin} shows a slightly stronger statement, namely that for any finitely generated $H \leq \mathrm{AAut}(\cT)$, either $H$ contains a ping-pong pair or there exists a nonempty closed set $C \subseteq \partial \cT$ such that the action of $H$ on $C$ is equicontinuous. 
\end{rmk}

Recall that $V_\cT$ denotes the Higman-Thompson group associated to $\cT$.

\begin{proof}[\textbf{Proof of Theorem \ref{teo:V}}]
Fix a linear order on $\partial \cT$ as in the definition of $V_\cT$. Take $H \leq V_\cT$ a finitely generated subgroup and suppose that $H$ acts without finite orbits on $\partial \cT$. If $U_H = \bigcap_{h \in H} U_h$ is nonempty, consider again a rooted subtree $\cT' \subseteq \cT$ such that $U_H = \partial \cT'$, so $H$ acts on $U_H$ by elliptic Higman-Thompson elements of $\mathrm{AAut}(\cT')$ for the induced order on $\partial \cT'$. Corollary \ref{cor:torsion} shows that the image of $H$ in $\mathrm{AAut}(\cT')$ is finite, and thus there exists a finite $H$-orbit in $U_H$. This is a contradiction, hence $U_H$ is empty. Again Proposition \ref{prop:proximal} and Lemma \ref{lema:libre} show that there exists a ping-pong pair for the action of $H$.
\end{proof}

\section{Open questions}
We finish with some hopefully tractable open questions.
\begin{qn}
R. Grigorchuk, V. Nekrashevych and V. Sushchansky introduce in \cite{GNS} the group $\cR$ of homeomorphisms of Cantor space defined by asynchronous transducers, which is known \cite{BB} to contain the higher-dimensional Brin-Thompson groups from \cite{2V}. On the other hand, the automorphism group of a Higman-Thompson group $V_{d,k}$ coincides with the subgroup of bi-synchronizing transducers inside $\cR$ \cite{BCMNO}. Do any of these groups satisfy the dynamical Tits alternative?
\end{qn}

\begin{qn}
Let $G$ be a group of homeomorphisms of a compact topological space $X$ such that its groupoid of germs is hyperbolic in the sense of V. Nekrashevych, see \cite{hypGpds}. Does the action of $G$ on $X$ satisfy the dynamical Tits alternative? Theorem \ref{teo:neretin} shows that this is true for the groups $V_G$, whose groupoid of germs is hyperbolic whenever $G$ is a contracting self-similar group.
\end{qn}

\newcommand{\etalchar}[1]{$^{#1}$}
\providecommand{\bysame}{\leavevmode\hbox to3em{\hrulefill}\thinspace}

\providecommand{\href}[2]{#2}
\ProvideTextCommandDefault{\cprime}{\tprime}

\end{document}